\newtheorem{theorem}{Theorem}
\newtheorem{lemma}[theorem]{Lemma}
\newtheorem{corollary}[theorem]{Corollary}
\newtheorem{definition}[theorem]{Definition}
\newcommand{\diag}{DUMMY}  
\DeclarePairedDelimiter{\abs}{\lvert}{\rvert}
\let\real=\pgfmath@calc@real
\let\minof=\pgfmath@calc@minof
\let\maxof=\pgfmath@calc@maxof
\let\ratio=\pgfmath@calc@ratio
\let\widthof=\pgfmath@calc@widthof
\let\heightof=\pgfmath@calc@heightof
\let\depthof=\pgfmath@calc@depthof
  \pgfplotsset{
    compat=newest,
    tick label style={font=\scriptsize},
    label style={font=\scriptsize},
    legend style={font=\scriptsize}
  }
     \renewcommand{\tikzsetnextfilename}[1]{}
\begin{document}
\begin{frontmatter}
\title{Iterative methods for the delay Lyapunov equation
with T-Sylvester preconditioning}
\author[EJ]{Elias Jarlebring}
\author[FP]{Federico Poloni}
\address[EJ]{Department of Mathematics, Royal Institute of Technology (KTH), Stockholm, SeRC Swedish e-Science Research Center. \texttt{eliasj@kth.se}}
\address[FP]{Department of Computer Science, University of Pisa, Italy. \texttt{federico.poloni@unipi.it}}
\date{\today}
\begin{abstract}
The delay Lyapunov equation is an important
matrix boundary-value problem which arises as an
analogue of the Lyapunov equation in the study of 
time-delay systems $\dot{x}(t) = A_0x(t)+A_1x(t-\tau)+B_0u(t)$.
We propose a new algorithm for the solution of the delay Lyapunov equation. Our method is based on the fact that the
delay Lyapunov equation can be
expressed as a linear system of equations, 
whose unknown is the value $U(\tau/2)\in\mathbb{R}^{n\times n}$, i.e., the delay Lyapunov matrix at time $\tau/2$.
This linear matrix equation with $n^2$ unknowns is solved
by adapting a preconditioned iterative method such as GMRES. The action of the $n^2\times n^2$ matrix associated to this linear system can be computed by solving a coupled matrix initial-value problem. 
A preconditioner for the iterative method is
proposed based on solving a T-Sylvester equation $MX+X^TN=C$,
for which there are methods available in the literature. 
We prove that the preconditioner is effective under
 certain assumptions.
The efficiency of the approach is 
illustrated by applying it to a time-delay system stemming
 from the discretization of a
 partial differential equation with delay.
Approximate solutions to this problem can be obtained for problems of size up to $n\approx 1000$, i.e., a linear system with $n^2\approx 10^6$ unknowns,
a dimension which is outside of the capabilities of the other existing methods for the delay Lyapunov equation.
\end{abstract}
\begin{keyword}
Matrix equations, iterative methods, 
Krylov methods, time-delay systems, 
Sylvester equations, ordinary differential equations
\end{keyword}
\end{frontmatter}
\section{Introduction}\label{sec:intro}
Consider the linear single-delay time-delay system defined
by  the equations
\begin{subequations}\label{eq:tds}%
\begin{eqnarray}%
   \dot{x}(t)&=&A_0x(t)+A_1x(t-\tau)+B_0u(t)\label{eq:tds.a}\\
   y(t)&=&C_0x(t),\label{eq:tds.b}
\end{eqnarray}%
\end{subequations}%
where $A_0,A_1\in\RR^{n\times n}$, $B_0\in\RR^{n \times m}$,
$C_0^\TT\in\RR^{n \times p}$. The general equation
\eqref{eq:tds} appears in many different fields. 
It is considered a very important 
topic in the field of systems and control, mostly due
to the fact that most feedback systems are 
non-instantaneous in the sense that there is a
delay between the observation (of for instance  the state)
and the action of the feedback. See monographs
\cite{Michiels:2007:STABILITYBOOK,Gu:2003:STABILITY}
and survey paper \cite{Richard:2003:SUMMARY} for 
literature on time-delay systems.

The delay Lyapunov equations associated
with \eqref{eq:tds} correspond to the problem of
finding $U\in \mathcal{C}^0([-\tau,\tau],\CC^{n\times n})$ such that
\begin{subequations}\label{eq:dlyaps}
\begin{eqnarray}
  U'(t)&=&U(t)A_0+U(t-\tau)A_1,\; t > 0, \label{eq:dlyapd_de}\\
  U(-t)&=&U(t)^\TT,\label{eq:dlyaps_sym}\\
  -W&=&  U(0)A_0+A_0^\TT U(0)+ U(\tau)^\TT A_1+A_1^\TT U(\tau), \label{eq:dlyaps_alg}
\end{eqnarray}
\end{subequations}
hold for a given a cost matrix $W=W^T\in\mathbb{R}^{n\times n}$ (in some applications, for instance, $W=C_0^TC_0$).

Equation \eqref{eq:dlyapd_de} is a matrix delay-differential equation
and   \eqref{eq:dlyaps_alg} is an algebraic condition
 involving $U(0)$, $U(\tau)$ and
$U(-\tau)=U(\tau)^T$ such that \eqref{eq:dlyaps} can be
interpreted as a  matrix boundary value problem.
In this paper we propose a new procedure to solve~\eqref{eq:dlyaps},
with the goal to have good performance for large $n$ ($n\approx 500-1000$, for instance).


The delay Lyapunov equation generalizes the standard Lyapunov equation, since,
e.g., if we set $\tau=0$ the equation reduces to the standard Lyapunov
equation.  
Moreover, as established by the last decades of research, 
the delay Lyapunov equation is in many ways 
playing the same important role
for time-delay systems as the standard Lyapunov
equation plays for standard (delay free) linear time-invariant dynamical systems.
More precisely, the delay Lyapunov equation has been studied 
in the following ways. It has been extensively used
to characterize stability of delay differential equations, 
as one can explicitly construct a Lyapunov functional
from  $U(t)$, where the solution is sometimes
referred to as delay Lyapunov matrices.
Sufficient conditions for  stability are given in
\cite{Kharitonov:2006:LYAPUNOV, Ochoa:2013:CRITICAL, Ochoa:2005:LYAPUNOV} and for neutral systems in
\cite{Ochoa:2007:NEUTRAL}, and
conditions for instability in 
\cite{Mondie:2011:INSTABILITY,Egorov:2014:NECESSARY}.
It has been used to provide bounds
on the transient phase of delay-differential equations in 
the PhD thesis \cite{Plischke:2005:TRANSIENT} and \cite{Kharitonov:2004:EXPEST,Kharitonov:2006:LYAPUNOVPLISCHKE}. Existence and uniqueness of the solutions are well characterized,
e.g., in \cite{Kharitonov:2006:LYAPUNOV}. See also the monograph \cite{Gu:2003:STABILITY}. Recently, it has been shown that in complete
analogy to the standard Lyapunov equation the solution
to the delay Lyapunov equation explicitly gives the $\mathcal{H}_2$-norm 
\cite{Jarlebring:2011:H2}. The delay Lyapunov equation 
can also be used to carry out a model order reduction 
which generalizes balanced truncation \cite{Jarlebring:2013:BALANCING}.

%

%
 
This paper concerns computational aspects of the
delay Lyapunov equation. 
Some computational aspects are treated in 
the literature, e.g., the matrix exponential formula 
in \cite{Plischke:2005:TRANSIENT},
the polynomial approximation 
approach in \cite{Huesca:2009:POLYNOMIAL},
spectral (Chebyshev-based) discretization approaches in 
\cite{Jarlebring:2011:H2,Vanbiervliet:2011:H2DISC}
and an ODE-approach in the PhD thesis
\cite[Chapter~3]{Merz:2012:THESIS}. 

In complete contrast to the delay Lyapunov equation, the computational
aspects of the standard Lyapunov equation have received considerable
attention, mostly in the numerical linear algebra community.
Most importantly, the Bartels-Stewart method
\cite{Bartels:1972:LYAP}, 
ADI methods \cite{Benner:2007:ADI},
Krylov methods \cite{Simoncini:2007:KPIK,Hu:1992:KRYLOV},
and rational Krylov methods \cite{Jaimoukha:1994:KRYLOV}, 
including preconditioning techniques \cite{Hochbruck:1995:PRECONDITIONED}, have turned to
be effective in various situations. For a more thorough review, see the survey \cite{Simoncini:2015:LyapReview}.
To our knowledge, there exist no natural generalization of the Bartels-Stewart
algorithm and there are no Krylov methods for delay Lyapunov equation. 

The method we propose is tailored to medium-scale equations; it combines the use of a Krylov-type method and a direct algorithm similar to the Bartels-Stewart one.
More precisely, our approach is based on a characterization of the solution to 
the delay Lyapunov equation as a linear system of equations with $n^2$ unknowns. This characterization is derived in Section~\ref{sect:reformulation}.
Since the linear system derived in Section~\ref{sect:reformulation}
is large and only given implicitly as a matrix vector product, we
propose to adapt iterative methods which are based on
matrix vector products only, e.g., 
GMRES \cite{Saad:1986:GMRES} or 
BiCGStab \cite{Vorst:1992:BICGSTAB}, to this problem.
It turns out to be natural to use
a preconditioner involving a matrix equation called the 
T-Sylvester equation, for which there are efficient $O(n^3)$ methods for the dense case
\cite{DeTeran:2011:TSYLV}. We quantify the quality of the
preconditioner by deriving a bound on the convergence factor
of the iterative method. The iterative method and the preconditioner 
are given in Section~\ref{sect:algorithm}. 
The performance of the approach is illustrated
with simulations 
in Section~\ref{sect:simulations}. We apply the method
 to a problem stemming from the discretization of a two-dimensional
partial delay-differential equation (PDDE).  The 
number of iterations appears to be essentially 
independent of the grid, which suggests that the 
preconditioner is a sensible choice for this PDDE.


We use notation which is standard for analysis of matrix equations. 
The vectorization operation is denoted $\vec(B)$, i.e.,
if $B=\begin{bmatrix}b_1&\ldots&b_m\end{bmatrix}\in\RR^{n\times m}$,
$\vec(B)^T=\begin{bmatrix}b_1^T&\ldots&b_m^T\end{bmatrix}$.
The Kronecker product is denoted $\otimes$. Unless otherwise stated,
 $\|\cdot\|$ denotes the Euclidean norm for vectors and the
spectral norm for matrices. We denote the Frobenius norm 
by $\|\cdot\|_F$. 

\begin{figure}
\centering
\includegraphics{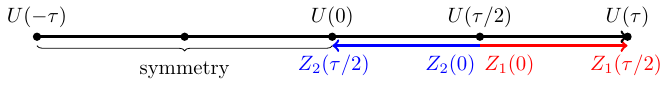}
\caption{Graphical representation of the relation between $U(t)$, $Z_1(t)$ and $Z_2(t)$.} \label{fig:line}
\end{figure}

\section{Reformulation of the delay Lyapunov equations}\label{sect:reformulation}
Our method is based on a reformulation of the
delay Lyapunov equation where we define for each $t\in[0,\tau/2]$
\begin{align}
  Z_1(t)&:=U(\tau/2+t), & 
  Z_2(t)&:=U(\tau/2-t).
\end{align}
%
%
The two matrix-valued functions $Z_1(t)$ and $Z_2(t)$ coincide with $U(t)$ up to a change of the time coordinate which is represented visually in Figure~\ref{fig:line}. Essentially, they represent two different branches of $U(t)$ ``taking off'' from $\tau/2$ in opposite directions. Note that the left half of the function, $U([-\tau,0])$, is determined uniquely by the right half $U([0,\tau])$ by the transposition symmetry condition~\eqref{eq:dlyaps_sym}. The only nontrivial condition implied by~\eqref{eq:dlyaps_sym} is that $U(0)$ must be symmetric.

Note that 
\begin{subequations}
\begin{eqnarray}
Z_1(t-\tau)&=&U(t-\tau+\tau/2)=U(t-\tau/2)=
U(\tau/2-t)^T=Z_2(t)^T\\
Z_2(t-\tau)&=&U(\tau/2-t-\tau)=U(-t-\tau/2)=U(t+\tau/2)^T=Z_1^T(t)
\end{eqnarray}
\end{subequations}
Hence, the delay differential equation \eqref{eq:dlyapd_de} becomes
an ordinary differential equation
\begin{subequations}\label{eq:Z1Z2}
\begin{eqnarray}
  Z_1'(t)&=&\phantom{-}
Z_1(t)A_0+ Z_2(t)^TA_1,   \label{eq:Z1only} \\
  Z_2'(t)&=&-Z_1(t)^TA_1-Z_2(t)A_0. \label{eq:Z2only}
\end{eqnarray}
\end{subequations}
This is a constant-coefficient homogeneous linear system of ODEs which can be solved explicitly if the common (unknown) initial value $Z_1(0)=Z_2(0)=U(\tau/2)$ is provided. Using vectorization, we can give an explicit formula
\begin{equation} \label{eq:vectorizedODE}
\begin{bmatrix}
\vec Z_1(t)\phantom{^T}\\
\vec Z_2(t)^T\\
\end{bmatrix}=
\exp(t\mathcal{A})
\begin{bmatrix}
\vec U(\tau/2)\phantom{^T}\\
\vec U(\tau/2)^T
\end{bmatrix},
\end{equation}
where
\begin{equation}  \label{eq:defmathcalA}
\mathcal{A}:=
\begin{bmatrix}
  A_0^T\otimes I_n & A_1^T\otimes I_n\\
  -I_n\otimes A_1^T &   -I_n\otimes A_0^T 
\end{bmatrix}.
\end{equation}

In terms of $Z_1(t)$ and $Z_2(t)$, the algebraic condition
\eqref{eq:dlyaps_alg} and the symmetry condition 
\eqref{eq:dlyaps_sym} for $t=0$ reduce to
\begin{subequations} \label{conds}
\begin{eqnarray}
  0&=&W+
Z_2(\tau/2)^TA_0+A_0^TZ_2(\tau/2)+  \label{cond1}
Z_1(\tau/2)^TA_1+A_1^TZ_1(\tau/2),\\
  0&=&Z_2(\tau/2)-Z_2(\tau/2)^T.     \label{cond2}
\end{eqnarray}
\end{subequations}
Notice that the right-hand side of~\eqref{cond1} is symmetric and that of~\eqref{cond2} is antisymmetric. A linear combination of them gives
\begin{equation} \label{lincondZ}
0=W+Z_2(\tau/2)^T(A_0-c I)+(A_0^T+cI)Z_2(\tau/2)+
Z_1(\tau/2)^TA_1+A_1^TZ_1(\tau/2)	
\end{equation}
for each $c\in\mathbb{R}$, which forms the basis of our matrix operator.

%

\begin{definition} Let $L_c:\RR^{n\times n}\rightarrow\RR^{n\times n}$ be defined by
  \begin{multline}  \label{eq:Ldef}
  L_c(X):=\\Z_2(\tau/2)^T(A_0-c I)+(A_0^T+cI)Z_2(\tau/2)+
Z_1(\tau/2)^TA_1+A_1^TZ_1(\tau/2)
  \end{multline}
where $Z_i:[0,\tau/2]\rightarrow\RR^{n\times n}$, $i=1,2$ are the unique solutions 
to the initial value problem \eqref{eq:Z1Z2}
with $Z_1(0)=Z_2(0)=X$.
\end{definition}

We shall need the following easy linear algebra result.
\begin{lemma} \label{symantisym}
Let $M=M^T\in\mathbb{R}^{n\times n}$ and $N=-N^T\in\mathbb{R}^{n\times n}$ be two matrices, one symmetric and one antisymmetric. Then, $M+N=0$ if and only if $M=N=0$.
\end{lemma}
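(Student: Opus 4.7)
The plan is to prove the nontrivial direction, namely that $M+N=0$ implies $M=N=0$; the reverse implication is immediate. The core idea is that every square matrix decomposes uniquely as a symmetric part plus an antisymmetric part, so $M+N=0$ is an instance of the unique decomposition of the zero matrix, forcing both summands to vanish.

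Concretely, I would assume $M+N=0$ and take the transpose of this identity. Using $M^T=M$ and $N^T=-N$, the transposed equation becomes $M-N=0$. Adding and subtracting this with the original $M+N=0$ yields $2M=0$ and $2N=0$, hence $M=N=0$. This is a two-line computation and there is no real obstacle; the only thing to be careful about is explicitly invoking both hypotheses $M=M^T$ and $N=-N^T$ when transposing, which is exactly where symmetry and antisymmetry get used. The lemma will then be applied in the sequel by observing that~\eqref{cond1} has a symmetric right-hand side and~\eqref{cond2} has an antisymmetric one, so that the two conditions together are equivalent to their sum~\eqref{lincondZ} (for any single choice of $c$, via this lemma applied to the symmetric and antisymmetric parts).
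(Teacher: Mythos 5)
Your proof is correct and uses exactly the same argument as the paper: transpose the identity $M+N=0$ to obtain $M-N=0$, then add and subtract to conclude $2M=2N=0$. Nothing further is needed.
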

\begin{proof} 
The `if' part is trivial; let us prove the `only if'. Suppose $M+N=0$; then, transposing, we have also $0=M^T+N^T=M-N$. Summing and subtracting the two relations we have $2M=2N=0$.
\end{proof}

A time-delay system is called \emph{exponentially stable} if $\norm{x(t)} \leq \alpha\exp(-\beta t)$ for some constants $\alpha>0,\beta>0$. If this condition holds, then the solution $U(t)$ to~\eqref{eq:dlyaps} is unique \cite[Theorem~4]{Kharitonov:2006:LYAPUNOVPLISCHKE}. In this case, we can formulate the equivalence between the delay Lyapunov equation and a linear system with operator $L_c$.

\begin{theorem}[Equivalence] 
Suppose $A_0$ and $A_1$ and 
$\tau$ are such that \eqref{eq:tds} is
exponentially stable and let 
$W\in\RR^{n\times n}$ be any  symmetric matrix. 
Let $U$ be the solution to the 
delay Lyapunov equations \eqref{eq:dlyaps}
and let $L_c$ be defined by \eqref{eq:Ldef}.
Then, for any $c\neq 0$, $X=U(\tau/2)$ is the unique solution of the linear system 
\begin{equation}\label{eq:Lc_linsys}
L_c(X)=-W.
\end{equation}
\end{theorem}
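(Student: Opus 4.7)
The plan is to prove existence and uniqueness separately, both relying on Lemma~\ref{symantisym} to decouple~\eqref{lincondZ} into the symmetric part~\eqref{cond1} and the antisymmetric part~\eqref{cond2}.

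For existence, I would simply observe that with $X = U(\tau/2)$, the change-of-variable argument that preceded the statement shows that the functions $Z_1, Z_2$ produced by the ODE~\eqref{eq:Z1Z2} with initial value $X$ are exactly $Z_1(t) = U(\tau/2+t)$ and $Z_2(t) = U(\tau/2-t)$ (by uniqueness of the ODE). Consequently both~\eqref{cond1} and~\eqref{cond2} hold (the former is~\eqref{eq:dlyaps_alg} and the latter is the symmetry of $U(0)$), and their linear combination~\eqref{lincondZ} yields $L_c(X) = -W$.

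For uniqueness, let $X$ be any solution of~\eqref{eq:Lc_linsys}, and let $Z_1, Z_2$ be the resulting solutions of~\eqref{eq:Z1Z2}. Rewriting $L_c(X) = -W$ as
\begin{multline*}
0 = \bigl[W + Z_2(\tau/2)^TA_0 + A_0^T Z_2(\tau/2) + Z_1(\tau/2)^TA_1 + A_1^TZ_1(\tau/2)\bigr] \\
{} + c\bigl[Z_2(\tau/2) - Z_2(\tau/2)^T\bigr],
\end{multline*}
I would note that the first bracket is symmetric and the second is antisymmetric. Lemma~\ref{symantisym} then forces both brackets to vanish; since $c \neq 0$, this recovers~\eqref{cond1} and~\eqref{cond2}. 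I would then define a candidate $\tilde U : [-\tau,\tau] \to \mathbb{R}^{n\times n}$ by
\[
\tilde U(\tau/2 + s) := Z_1(s), \qquad \tilde U(\tau/2 - s) := Z_2(s) \quad (s \in [0,\tau/2]),
\]
and $\tilde U(-t) := \tilde U(t)^T$ on $[-\tau,0]$. The two definitions at $t=0$ agree because $Z_2(\tau/2)$ is symmetric by~\eqref{cond2}, so $\tilde U$ is well-defined and continuous; it satisfies~\eqref{eq:dlyapd_de} because the ODE~\eqref{eq:Z1Z2} is equivalent to~\eqref{eq:dlyapd_de} via the same change of variable used earlier; \eqref{eq:dlyaps_sym} holds by construction; and~\eqref{eq:dlyaps_alg} is precisely~\eqref{cond1}. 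Exponential stability and the uniqueness result cited from \cite{Kharitonov:2006:LYAPUNOVPLISCHKE} then give $\tilde U = U$, hence $X = \tilde U(\tau/2) = U(\tau/2)$.

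The main subtlety—and the reason the hypothesis $c \neq 0$ appears—is the splitting step: a straight linear combination of~\eqref{cond1} and~\eqref{cond2} ordinarily loses information, but here the symmetric/antisymmetric decomposition combined with Lemma~\ref{symantisym} recovers both conditions, provided the antisymmetric part carries a nonzero coefficient. The rest is routine once one recognises that $L_c(X) = -W$ is designed precisely to encode both algebraic boundary conditions simultaneously.
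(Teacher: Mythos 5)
Your proposal is correct and follows essentially the same route as the paper: existence via the linear combination \eqref{lincondZ}, and uniqueness by applying Lemma~\ref{symantisym} to split $L_c(X)+W=0$ back into \eqref{cond1} and \eqref{cond2} (using $c\neq 0$), then assembling a candidate solution of \eqref{eq:dlyaps} from $Z_1,Z_2$ and invoking uniqueness under exponential stability. The only differences are cosmetic, e.g.\ you state the symmetry extension $\tilde U(-t)=\tilde U(t)^T$ a bit more explicitly than the paper does.
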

\begin{proof}
Equation~\eqref{lincondZ} already shows that if $X=U(\tau/2)$ then $L_c(X)=-W$. It remains to prove the reverse implication. Suppose that $X$ satisfies~$L_c(X)+W=0$; then, by Lemma~\ref{symantisym} applied to
\begin{align*}
M &= Z_2(\tau/2)^TA_0+A_0^TZ_2(\tau/2)+
Z_1(\tau/2)^TA_1+A_1^TZ_1(\tau/2) - W,\\
N &= c(Z_2(\tau/2) - Z_2(\tau/2)^T),
\end{align*}
the conditions~\eqref{conds} hold. Define
\[
\hat{U}(t) = \begin{cases}
				Z_2(\tau/2-t) & 0 \leq t < \tau/2,\\
				Z_1(t-\tau/2) & \tau/2 \leq t \leq \tau,\\
				U(-t)^T &  -\tau\leq t <0.
			 \end{cases}
\]
The function $\hat{U}(t)$ is continuous in $0$ by~\eqref{cond2}, and in $\pm \tau/2$ by the choice of initial conditions, hence it is globally continuous on $[-\tau,\tau]$. Moreover, the differential equation~\eqref{eq:dlyapd_de} holds for all $t\neq 0, \tau/2$. By continuity, it must also hold for these values. Hence $\hat{U}(t)$ solves~\eqref{eq:dlyaps}. As we assume exponential stability, the solution is unique and hence $\hat{U}(t)=U(t)$.
\end{proof}

Since the linear system $L_c(X)=-W$ has a unique solution for each symmetric $W\in\mathbb{R}^{n\times n}$, we have
the following result.
\begin{corollary} \label{nonsing}
Suppose \eqref{eq:tds} is exponentially stable. Then, the linear operator $L_c$ is nonsingular for each $c\neq 0$.
\end{corollary}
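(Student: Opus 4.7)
The plan is to prove that $\ker L_c = \{0\}$. Since $L_c$ is a linear operator on the finite-dimensional space $\mathbb{R}^{n \times n}$, having trivial kernel is equivalent to being invertible, so this will suffice.

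To establish $\ker L_c = \{0\}$, I would apply the preceding theorem with the trivial (and therefore symmetric) choice $W = 0$. The theorem asserts that the equation $L_c(X) = -W = 0$ has a unique solution, namely $X = U(\tau/2)$, where $U$ solves the delay Lyapunov equation~\eqref{eq:dlyaps} with $W = 0$. It remains only to identify this unique element. Setting $X = 0$ as common initial condition in the ODE system~\eqref{eq:Z1Z2} gives, by uniqueness for linear ODEs, $Z_1 \equiv Z_2 \equiv 0$ on $[0, \tau/2]$; substituting into the definition~\eqref{eq:Ldef} of $L_c$ then yields $L_c(0) = 0$. Thus $0 \in \ker L_c$, and by the uniqueness supplied by the theorem it is the only such element.

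The only delicate point—really the only place where one has to be a bit careful—is that $L_c$ does not in general map into the subspace of symmetric matrices (as one sees from the computation $L_c(X) - L_c(X)^T = 2c\bigl(Z_2(\tau/2) - Z_2(\tau/2)^T\bigr)$), so the theorem by itself only asserts unique solvability for symmetric right-hand sides, not surjectivity onto all of $\mathbb{R}^{n \times n}$. The argument sketched above sidesteps this gap: it uses only the kernel condition, for which it suffices to consider the right-hand side $W = 0$, which is trivially symmetric. Once injectivity is in hand, surjectivity onto the full space $\mathbb{R}^{n \times n}$ follows automatically from the finite-dimensionality.
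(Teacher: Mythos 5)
Your proof is correct and takes essentially the same route as the paper, which deduces the corollary in one line from the Equivalence theorem: unique solvability of $L_c(X)=-W$ for the (symmetric) right-hand side $W=0$ gives a trivial kernel, and finite-dimensionality upgrades injectivity to nonsingularity. You merely spell out the details the paper leaves implicit, including the correct observation that the theorem alone only covers symmetric right-hand sides, so surjectivity must come from the dimension argument rather than directly.
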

A delay-free formulation of the delay Lyapunov
equations has also been derived in \cite[Equation (13)]{Kharitonov:2006:LYAPUNOV}. That formulation cannot be described with a linear operator 
in a way that can be adapted to an iterative method in the same
way that we show in the following section.

\section{Algorithm}\label{sect:algorithm}
We now know from the previous section
that the matrix equation \eqref{eq:Lc_linsys} is
equivalent to the delay Lyapunov equation.
By vectorizing \eqref{eq:Lc_linsys}, we obtain the linear system 
on standard form
\begin{equation} \label{linsys}
\vec L_c(\unvec x)= -\vec W,	
\end{equation}
where the inverse function $\unvec(x)$ maps $\vec X \in\mathbb{R}^{n^2}$ to $X\in\mathbb{R}^{n\times n}$. Let $A\in\mathbb{R}^{n^2\times n^2}$ the matrix associated to it. We know that $A$ is nonsingular by Corollary~\ref{nonsing}. 

Our approach is based on specializing an iterative method
for linear systems to \eqref{linsys}.
In order to specialize  an iterative method
for large-scale linear systems, we need two ingredients.
We need an efficient procedure to compute the action corresponding
to the left-hand side of \eqref{linsys}; and we need
a preconditioner. These two ingredients are
described in the following two subsections.
\subsection{Action of $L_c$}\label{sect:action}
The action of the operator $L_c$ is defined by \eqref{eq:Z1Z2} and 
\eqref{eq:Ldef}. As a consequence,
the recipe to compute $L_c(X)$ for a given matrix $X$ is simple:
\begin{enumerate}
  \item Compute the solutions $Z_1(\tau/2)$, $Z_2(\tau/2)$ of the linear, constant-coefficient initial-value problem~\eqref{eq:Z1Z2} with initial values $Z_1(0)=Z_2(0)=X$. \label{propagationstep}
  \item Compute $L_c(X)$ using the expression~\eqref{eq:Ldef}.
\end{enumerate}
In practice, a detail is crucial in the choice of the numerical algorithm for the first step. We distinguish two possible scenarios:
\begin{itemize}
  \item We use a method with a fixed step-size and no adaptivity: for instance, the (explicit or implicit) Euler method, or a non-adaptive Runge-Kutta method. In this case, we are effectively substituting $L_c$ with a different operator $\hat{L}_c$, which replaces the differential operator in Step~\ref{propagationstep} with a finite discretization. This operator (for most classical methods) is still linear, so the theory of Krylov subspace methods can be applied without changes: we are applying a Krylov method to get an approximate solution of a nearby linear problem $\hat{L}_c$.
  \item We use an adaptive method, which can change step size along the algorithm, possibly in different ways for different initial values $X$. For instance, the Dormand-Prince method (Matlab's \texttt{ode45}). While apparently the two cases are similar, the addition of adaptivity has an important consequence: the computed operator $\hat{L}_c$, this time, is no longer a linear operator, because in general $\hat{L}_c(X_1+X_2) \neq \hat{L}_c(X_1) + \hat{L}_c(X_2)$. Indeed, for different values of the input $X$ the initial-value problems could be solved using different grids, and hence different discrete approximations of the propagation operator. The correct framework to analyze the method in this case is the one of inexact Krylov methods~\cite{Simoncini:2003:INEXACTKRYLOV}. We present an error analysis under this framework in Section~\ref{sect:inexact}.
\end{itemize}

\subsection{Preconditioning}\label{sect:preconditioning}
In order to make iterative methods effective, 
it is common to carry out a transformation which 
preconditions the problem. This can often be interpreted as 
transforming the problem with an approximation of the
 inverse of the matrix/operator.
We focus on a particular preconditioner 
obtained by solving the problem exactly when $A_1$ is replaced with the zero matrix. Then~\eqref{eq:Ldef} becomes
\begin{equation}\label{eq:tLdef}
\tilde{L}_c(X) := Z_2(\tau/2)^T(A_0-c I)+(A_0^T+cI)Z_2(\tau/2),
\end{equation}
and~\eqref{eq:Z2only} decouples from $Z_1$ such that 
\begin{equation}\label{eq:Ztilde_approx}
Z_2' = -Z_2(t)A_0,
\end{equation}
which we can solve explicitly to get $Z_2(\tau/2) = X\exp(-\tau A_0/2)$.

Let $T$ be the operator 
\[
  T(Y)=(A_0^T+cI)Y+Y^T(A_0-cI).
\]
The operator $L_c$ is invertible if and only $T^{-1}$ exists, and in this case we have
\begin{equation} \label{eq:defLtildeInv}
 \tilde{L}^{-1}_c(Z)=T^{-1}(Z)\exp(\tau A_0/2). 
\end{equation}
Inverting the operator $T$ correspond to solving the so-called (real) \emph{T-Sylvester equation} $MY+Y^TN=C$. The paper~\cite{DeTeran:2011:TSYLV} discusses the solvability of this equation and presents a direct $O(n^3)$ Bartels--Stewart-like algorithm for its solution. In particular, the following result holds.
\begin{theorem}[\protect{\cite[Lemma~8]{Kressner:2009:PALQR},\cite{DeTeran:2011:TSYLV}}]
Let $M,N,C\in\mathbb{R}^{n\times n}$. The equation $MX+X^TN=C$ has a unique solution $X$ for each right-hand side $C$ if and only if $\mu_i\bar{\mu}_j\neq 1$ for each pair $\mu_i,\mu_j$ of eigenvalues of the pencil $M-\lambda N^T$.
\end{theorem}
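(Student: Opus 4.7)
The strategy is to reduce the equation $MX+X^TN=C$ to a quasi-triangular form by means of a generalized Schur decomposition of the pencil $M-\lambda N^T$, and then to solve by a block back-substitution in which each step is a small linear system whose non-singularity expresses the eigenvalue condition.

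First I would observe that the map $L:X\mapsto MX+X^TN$ is linear on the $n^2$-dimensional space $\mathbb{R}^{n\times n}$, so unique solvability for every right-hand side is equivalent to $L$ being injective, i.e.\ to the homogeneous equation $MX+X^TN=0$ admitting only $X=0$. Next I would invoke the real generalized Schur decomposition of the pair $(M,N^T)$: there exist orthogonal $Q,Z$ such that $T:=QMZ$ and $S:=QN^TZ$ are both quasi-upper triangular, with the generalized eigenvalues of the diagonal blocks equal to the $\mu_i$. The substitution $X=Z\tilde X Q$ transforms the equation into
\begin{equation*}
  T\tilde X+\tilde X^T S^T=QCQ^T,
\end{equation*}
where $T$ is quasi-upper triangular and $S^T$ is quasi-lower triangular.

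Third, I would carry out a back-substitution, processing the entries of $\tilde X$ by diagonals and moving from the bottom-right corner toward the top-left. In the case of scalar diagonal positions with $i<j$, the $(i,j)$ and $(j,i)$ entries of the transformed equation couple the new unknowns $\tilde X_{ij}$ and $\tilde X_{ji}$ through the $2\times 2$ matrix
\begin{equation*}
  \begin{pmatrix}T_{ii}&S_{jj}\\S_{ii}&T_{jj}\end{pmatrix},\qquad \det=T_{ii}T_{jj}-S_{ii}S_{jj}=S_{ii}S_{jj}(\mu_i\mu_j-1),
\end{equation*}
while the diagonal $(i,i)$ entry gives a scalar equation with coefficient $T_{ii}+S_{ii}$ on $\tilde X_{ii}$. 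The operator $L$ is therefore bijective precisely when all of these local mini-systems are non-singular.

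The main obstacle is to check that the collection of non-degeneracy conditions produced by the back-substitution agrees with the spectral condition $\mu_i\bar{\mu}_j\neq 1$ of the statement. The off-diagonal scalar case matches cleanly after dividing by $S_{ii}S_{jj}$, but the diagonal entries and, more delicately, the larger coupled systems that arise whenever $T$ and $S$ carry $2\times 2$ diagonal blocks (the real Schur form of a complex conjugate pair of eigenvalues) require an explicit computation of the corresponding determinants and their rewriting in terms of the $\mu_i$ and $\bar{\mu}_i$. Tracking which entries of $\tilde X$ are already determined at each step, and verifying that each such local determinant factors through terms of the form $\mu_i\bar{\mu}_j-1$, is the technical heart of the argument.
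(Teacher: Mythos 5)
The paper itself contains no proof of this theorem: it is quoted from \cite{Kressner:2009:PALQR} and \cite{DeTeran:2011:TSYLV}, so the relevant comparison is with those references. Your route --- real generalized Schur form of the pair $(M,N^T)$ with orthogonal $Q,Z$, the substitution $X=Z\tilde XQ$ giving $T\tilde X+\tilde X^TS^T=QCQ^T$, then elimination on the coupled pairs $(\tilde X_{ij},\tilde X_{ji})$ --- is essentially the Bartels--Stewart-type argument of \cite{DeTeran:2011:TSYLV}, and your algebra in the scalar-block case is correct (the local coefficient matrix and its determinant are as you state, and ordering the pairs by decreasing $i+j$ does make the elimination well-founded). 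One step you only assert is the ``bijective precisely when all local systems are nonsingular'' claim: the ``only if'' half needs the observation that, in that ordering, the transformed operator is block triangular with the local systems as its diagonal blocks, so its determinant is the product of the local determinants.

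There are, however, genuine gaps. First, the part you defer --- the $2\times2$ diagonal blocks of the real quasi-triangular form --- is precisely where the cited proof does its real work, and it cannot be sidestepped by passing to the complex Schur form with unitary $Q,Z$: your substitution requires $QQ^T=I$, since the equation involves the transpose rather than the conjugate transpose. Second, and more concretely, your own diagonal condition exposes a mismatch with the statement you are asked to prove: elimination at position $(i,i)$ requires $T_{ii}+S_{ii}\neq0$, i.e.\ $\mu_i\neq-1$, not $\mu_i\bar\mu_i\neq1$, and these differ at $\mu_i=1$. Indeed the statement, read with the pair $i=j$ allowed, is not an equivalence: for $n=1$, $M=N=1$ the equation is $2x=c$, uniquely solvable for every $c$, although $\mu_1\bar\mu_1=1$ (while reading it with $i\neq j$ only misses the genuinely singular case $\mu_i=-1$, as $M=1$, $N=-1$ shows). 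The sharp characterization in the cited references is that the pencil $M-\lambda N^T$ is regular and its spectrum is reciprocal-free except that the eigenvalue $1$ may occur with multiplicity one; a careful completion of your back-substitution yields exactly that (your local conditions are $\mu_i\mu_j\neq1$ for $i\neq j$ together with $\mu_i\neq-1$), hence the sufficiency direction of the quoted statement, but it cannot deliver the quoted ``only if'' verbatim. Finally, dividing the determinant by $S_{ii}S_{jj}$ presupposes finite eigenvalues; the cases $S_{ii}=0$ and the regularity of the pencil must be treated separately, with $0$ and $\infty$ counted as a reciprocal pair.
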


In our case, $M=A_0^T+cI$, $N=A_0-cI$, so after a quick computation the solvability condition reduces to the following condition, which is independent of $c$.

\begin{definition}[Hamiltonian eigenpairing]
We say that the matrix $A_0\in\RR^{n\times n}$ has no \emph{Hamiltonian eigenpairing}, if for each pair of eigenvalues $\lambda_i,\lambda_j$ of the matrix $A_0$, we have
\[
\lambda_i+\bar{\lambda}_j\neq 0.
\]
\end{definition}
A matrix has no Hamiltonian eigenpairing, for instance, if $\Re \lambda < 0$ for each eigenvalue $\lambda$ of $A_0$, i.e., if the delay-free system obtained by setting $A_1=0$ is stable.


In order to characterize the convergence 
and quality of the preconditioner we use
a fundamental min-max bound. 
Suppose we carry out GMRES on the matrix $A\in\RR^{N\times N}$ 
with eigenvalues $\lambda_1,\ldots,\lambda_N$. 
From  \cite[Proposition~4]{Saad:1986:GMRES} we have
the bound of the residual
\[
  \|r_{m+1}\|\le \kappa(V)\varepsilon^{(m)}\|r_0\|,
\]
where $V$ is the eigenvector matrix of $A$ (which is assumed to be diagonalizable),
and
\[
  \varepsilon^{(m)}=\min_{p\in P_m} \max_i |p(\lambda_i)|
\]
where $P_m=\{p:\textrm{polynomial of degree }m\textrm{ such that }p(0)=1\}$. 
We now apply the standard Zarantonello
 bound~\cite[Lemma~6.26]{Saad:1996:LINSYS}, where we assume that the
eigenvalues are contained in a disk of radius $r$ centered
at $c=1$, corresponding to selecting $p(z)=\frac{(c-z)^m}{c^m}$
such that $\varepsilon^{(m)}\le r^m/c^m=r^m\le \|A-I\|^m$. 
Preconditioned GMRES with preconditioner $\tilde{A}^{-1}$
is equivalent to GMRES in exact arithmetic applied to the matrix $\tilde{A}^{-1}A$ (apart from
termination criteria and initialization). Therefore,  a bound on $\|\tilde{A}^{-1}A-I\|$ provides
a characterization of the  convergence factor of preconditioned GMRES. 
Because of the vectorization included in our setting, bounding $\|\tilde{A}^{-1}A-I\|$ corresponds to giving an estimate for the quantity
\[
  \frac{\|\tilde{L}_c^{-1}(L_c(X))-X\|_F}{\norm{X}_F}.
\]
Our preconditioner is constructed by setting $A_1=0$.
Therefore, we expect that 
the preconditioner works well if $\|A_1\|$ is small.
This reasoning is formalized in the following result.

%
\begin{theorem}[Quality of preconditioner]\label{thm:precondquality}
 Suppose the system \eqref{eq:tds} is exponentially stable and suppose that $A_0$ 
has no Hamiltonian eigenpairing. Let
$L_c$ and $\tilde{L}_c$ be defined by \eqref{eq:Ldef} and \eqref{eq:tLdef} respectively. Then,
\begin{equation}\label{eq:precondquality}
  \frac{\|\tilde{L}_c^{-1}(L_c(X))-X\|_F}{\norm{X}_F}=\mathcal{O}(\|A_1\|_2),
\end{equation}
where the constant hidden in the $\mathcal{O}(\cdot)$ notation depends only on $\norm{A_0}$, $\tau$ and $c$.
\end{theorem}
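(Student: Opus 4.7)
The plan is to write $L_c(X) - \tilde{L}_c(X)$ as a sum of terms that are each controlled by $\|A_1\|$, then invoke boundedness of $\tilde{L}_c^{-1}$ (which follows from the hypothesis on $A_0$) to transfer the estimate to $\tilde{L}_c^{-1}(L_c(X))-X$.

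First, I would set up the comparison explicitly. Let $Z_1(t), Z_2(t)$ be the solutions of the coupled ODE~\eqref{eq:Z1Z2} with initial value $X$, and let $\tilde{Z}_2(t) = X\exp(-tA_0)$ be the solution used to construct $\tilde{L}_c$ (formally, the $Z_2$-component of~\eqref{eq:Z1Z2} with $A_1$ replaced by $0$). Subtracting~\eqref{eq:tLdef} from~\eqref{eq:Ldef}, the difference decomposes as
\begin{multline*}
L_c(X) - \tilde{L}_c(X) = \bigl[Z_2(\tau/2)-\tilde{Z}_2(\tau/2)\bigr]^T(A_0-cI) + (A_0^T+cI)\bigl[Z_2(\tau/2)-\tilde{Z}_2(\tau/2)\bigr]\\
+ Z_1(\tau/2)^T A_1 + A_1^T Z_1(\tau/2).
\end{multline*}
Thus there are two contributions to bound: the perturbation of $Z_2(\tau/2)$ caused by turning on the $A_1$ coupling, and the two explicit $A_1$ terms.

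Next I would bound both contributions via the matrix exponential representation~\eqref{eq:vectorizedODE}. Writing $\mathcal{A} = \mathcal{A}_0 + E$, where $\mathcal{A}_0$ is the block-diagonal operator obtained by setting $A_1=0$ in~\eqref{eq:defmathcalA} and $E$ is the off-diagonal perturbation, we have $\|E\| = \mathcal{O}(\|A_1\|)$. The standard integral identity
\[
\exp\!\bigl((\tau/2)\mathcal{A}\bigr) - \exp\!\bigl((\tau/2)\mathcal{A}_0\bigr) = \int_0^{\tau/2} e^{(\tau/2 - s)\mathcal{A}_0}\, E\, e^{s\mathcal{A}}\, ds
\]
yields $\|\exp((\tau/2)\mathcal{A}) - \exp((\tau/2)\mathcal{A}_0)\| \le C'\|A_1\|$ with $C'$ depending only on $\|A_0\|$, $\tau$ (for $\|A_1\|$ bounded, say, by $1$). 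Applying this to~\eqref{eq:vectorizedODE} gives simultaneously $\|Z_2(\tau/2)-\tilde{Z}_2(\tau/2)\|_F = \mathcal{O}(\|A_1\|)\|X\|_F$ and $\|Z_1(\tau/2)\|_F \le (e^{(\tau/2)\|A_0\|} + \mathcal{O}(\|A_1\|))\|X\|_F$. Plugging these into the displayed decomposition of $L_c(X)-\tilde{L}_c(X)$ gives
\[
\|L_c(X) - \tilde{L}_c(X)\|_F \le C_1 \|A_1\|_2 \|X\|_F,
\]
where $C_1$ depends only on $\|A_0\|$, $\tau$, $c$.

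Finally, I would combine this estimate with the boundedness of $\tilde{L}_c^{-1}$. By~\eqref{eq:defLtildeInv}, $\tilde{L}_c^{-1}(Z) = T^{-1}(Z)\exp(\tau A_0/2)$, and the T-Sylvester theorem quoted from~\cite{DeTeran:2011:TSYLV,Kressner:2009:PALQR}, combined with the computation (included in the paper) showing that the solvability condition reduces precisely to the no-Hamiltonian-eigenpairing hypothesis, guarantees that $T^{-1}$ exists and has norm depending only on $A_0$ and $c$. Hence $\|\tilde{L}_c^{-1}\|$ is bounded by some $C_2 = C_2(\|A_0\|,\tau,c)$. Writing
\[
\tilde{L}_c^{-1}(L_c(X)) - X = \tilde{L}_c^{-1}\bigl(L_c(X) - \tilde{L}_c(X)\bigr),
\]
we conclude $\|\tilde{L}_c^{-1}(L_c(X)) - X\|_F \le C_1 C_2 \|A_1\|_2 \|X\|_F$, which is the desired bound.

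The main obstacle I expect is the perturbation estimate for the matrix exponential with control over the constants: one must verify that the exponential factors $e^{s\mathcal{A}}$ can be absorbed into a constant depending only on $\|A_0\|$ and $\tau$ in the limit $\|A_1\|\to 0$ (the regime in which the $\mathcal{O}(\|A_1\|_2)$ statement is informative). Everything else is bookkeeping, once the decomposition of $L_c(X)-\tilde{L}_c(X)$ is laid out and the invertibility of $T$ is pinned down via the Hamiltonian-eigenpairing computation.
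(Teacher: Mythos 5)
Your proposal is correct and follows essentially the same route as the paper: the same decomposition of $L_c(X)-\tilde{L}_c(X)$ into the $Z_2$-perturbation terms plus the explicit $A_1$-terms, a Duhamel/variation-of-constants argument showing that this difference is $\mathcal{O}(\|A_1\|_2)\|X\|_F$, and finally the boundedness of $\tilde{L}_c^{-1}$ coming from invertibility of the T-Sylvester operator $T$ under the no-Hamiltonian-eigenpairing hypothesis. The only (minor) deviation is that you apply the exponential-perturbation identity to the full vectorized generator $\mathcal{A}$, bounding $Z_1(\tau/2)$ and $Z_2(\tau/2)-\tilde{Z}_2(\tau/2)$ simultaneously, whereas the paper applies variation of constants to the $n\times n$ equation $\Delta_2'=-\Delta_2 A_0 - Z_1^TA_1$ alone and bounds $Z_1$ separately (Lemma~\ref{lem:Zbound}); both versions carry the same harmless $\exp(\mathcal{O}(\tau\|A_1\|_2))$ factor, which is bounded in the regime $\|A_1\|_2\to 0$ where the statement is informative.
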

\begin{proof}
We invoke Lemma~\ref{thm:tLc_bound} (provided in~\ref{sec:appendixtechnical})
to bound the left-hand side of \eqref{eq:precondquality}
\begin{multline}\label{eq:firstbound}
\frac{\norm*{\tilde{L}_c^{-1}\left(L_c(X)\right)-X}_F}{\norm{X}_F}  = 
\frac{\norm*{\tilde{L}_c^{-1}\left(L_c(X) - \tilde{L}_c(X)\right)}_F}{\norm{X}_F} \leq\\
K\exp(\tau\|A_0\|/2)
\frac{\norm*{L_c(X) - \tilde{L}_c(X)}_F}{\norm{X}_F}.
\end{multline}

In order to bound $L_c(X) - \tilde{L}_c(X)$
we let $Z_1$ and $Z_2$ correspond to $L_c(X)$, i.e., 
they satisfy the equations \eqref{eq:Z1Z2} with initial value $Z_1(0)=Z_2(0)=X$.
We use tilde for the differential equation corresponding to  $\tilde{L}_c(X)$, i.e.,
$\tilde{Z}_2(t)$ satisfies \eqref{eq:Ztilde_approx}.
Moreover, let $\Delta_2:=Z_2-\tilde{Z}_2$. We have
\begin{multline}\label{eq:tLcLc}
  \tilde{L}_c(X)-L_c(X)=\\
\Delta_2(\tau/2)^T(A_0-c I)+(A_0^T+cI)\Delta_2(\tau/2)+
Z_1(\tau/2)^TA_1+A_1^T Z_1(\tau/2),
\end{multline}
for which $\Delta_2(\tau/2)$ and $Z_1(\tau/2)$
 can be bounded as follows.
Lemma~\ref{lem:Zbound} provided in~\ref{sec:appendixtechnical} tells us that
\begin{equation} \label{eq:Zbound}
  \|Z_1(\tau/2)\|_F \le 2\exp(\tau(\|A_0\|_2+\|A_1\|_2))\|X\|_F.
\end{equation}
By definition, $\Delta_2$ satisfies the ODE
\begin{equation}\label{eq:Delta_ODE}
 \Delta_2'(t) =-\Delta_2(t)A_0 +g(t), \quad
 \Delta_2(0)=0,
 \end{equation}
where $g(t):=-Z_1(t)^TA_1.$
The  variation-of-constants formula applied to \eqref{eq:Delta_ODE} results in the explicit expression
\[
 \Delta_2(t)=
-\int_0^t Z_1(s)^TA_1\exp((s-t)A_0)\,ds.
\]
Hence,
\begin{subequations} \label{eq:Delta_bound}
\begin{align}
\|\Delta_2(\tau/2)\|_F &\le \int_0^{\tau/2} \norm{Z_1(s)^TA_1\exp((s-\tau/2)A_0)}_F\,ds\\
 & \le \int_0^{\tau/2} \norm{Z_1(s)}_F \norm{A_1}_2\norm{\exp((s-\tau/2)A_0)}_2\,ds\\
 & \le \tau \exp(\tau(\|A_0\|_2+\|A_1\|_2))\norm{A_1}_2\exp(\tau\norm{A_0}_2/2) \|X\|_F.
\end{align}
\end{subequations}
We now evaluate the Frobenius norm of \eqref{eq:tLcLc} 
and apply the triangle inequality and the bounds~\eqref{eq:Zbound} and~\eqref{eq:Delta_bound},
which shows that
\begin{equation}\label{eq:tLcLc_diff}
  \frac{\|\tilde{L}_c(X)-L_c(X)\|_F}{\norm{X}_F}=\mathcal{O}(\|A_1\|_2).
\end{equation}
The hidden constant in \eqref{eq:tLcLc_diff} depends only on $\norm{A_0}_2$, $c$, and $\tau$. The conclusion \eqref{eq:precondquality}
follows by combining \eqref{eq:firstbound} and \eqref{eq:tLcLc_diff}.
\end{proof}

\subsection{Inexact Krylov theory} \label{sect:inexact}

As described in Section~\ref{sect:action}, if one uses an adaptive method for the integration, then assessing convergence requires the theory of inexact Krylov methods. The \emph{inexact GMRES} method for an operator $A$ is defined as the classical GMRES iteration, but with the difference that at each step $i = 1,2,\dots,k$ we do not compute the action of $w_i = A v_i$ of $A$ on a vector $v_i$, but rather we replace it with an approximation $w^{\mathrm{inex}}_i = (A + E_i) v_i$, for an unknown matrix $E_i$. The matrix $E_i$ can vary at each iteration. In equivalent terms, we can say that the product $A v_i$ is computed up to a specified accuracy $\norm{E_i}$, since
\[
\frac{\norm{w^{\mathrm{inex}}_i - A v_i}}{\norm{v_i}} = \frac{\norm{E_iv_i}}{\norm{v_i}} \leq \norm{E_i}.
\]
This process produces a Hessenberg matrix $H^{\mathrm{inex}}_i$, a sequence of approximations $x^{\mathrm{inex}}_i$ to the solution of the linear system, and a sequence of `fake' residuals $r^{\mathrm{inex}}_i$; these fake residual values are the ones computed during the iterative method, and they do \emph{not} equal in general $b - A x^{\mathrm{inex}}_i$. However, the following result holds.
\begin{theorem}[\protect{\cite[Theorem~5.3]{Simoncini:2003:INEXACTKRYLOV}}] \label{inexacttheorem}
Assume that $k \leq m$ iterations of the inexact GMRES method on an operator $A\in\mathbb{C}^{m\times m}$ have been carried out, and that for some $\delta > 0$ we have
\[
\norm{E_i} \leq \frac{\sigma_{\min}(H^{\mathrm{inex}}_k)}{k}\frac{1}{\norm{r^{\mathrm{inex}}_{i-1}}}\delta, \quad i = 1,2,\dots,k.
\]
Then, $\norm{b - A x_k^{\mathrm{inex}} - r^{\mathrm{inex}}_{k}} \leq \delta$. 
\end{theorem}
We would like to use this result to apply an ODE solver to compute an approximation $\hat{L}_c$ to the operator $L_c$, and tuning its accuracy at each step. However, this result is somehow ineffective for a truly adaptive computation: given a target error $\delta$, the accuracy at which we need to perform the matrix-vector product at step $i$ in order to obtain it is not available until the final step. Instead, we proceed as follows. Given a target accuracy goal $\varepsilon$, we apply several steps of the inexact GMRES method, and at each step $i=1,2,\dots$ we tune its accuracy so that
\[
\norm{E}_i \leq \frac{C\varepsilon}{\norm{r^{\mathrm{inex}}_{i-1}}},
\]
for a given constant $C$, and we stop the method at the first step $k$ for which $\norm{r^{\mathrm{inex}}_{i-1}} \leq \varepsilon$. Applying Theorem~\ref{inexacttheorem} with $\delta = \frac{k}{\sigma_{\min}(H^{\mathrm{inex}}_k)} C\varepsilon$ and the triangle inequality we obtain
\[
\norm{b - A x_k^{\mathrm{inex}}} \leq \norm{r^{\mathrm{inex}}_k} + \frac{k}{\sigma_{\min}(H^{\mathrm{inex}}_k)} C\varepsilon.
\]

The problem of computing the preconditioned operator $\tilde{L}_c^{-1}L_c$ up to a given accuracy is in itself nontrivial. Algorithms for adaptive integration of initial-value problems such as Matlab's \texttt{ode45} can produce $(\tilde{Z}_1(\tau/2),\tilde{Z}_2(\tau/2))$ such that
\[
\norm*{
\begin{bmatrix}
  Z_1(\tau/2) - \tilde{Z}_1(\tau/2)\\
  Z_2(\tau/2) - \tilde{Z}_2(\tau/2)
\end{bmatrix}
}_F \leq \varepsilon \norm*{\begin{bmatrix}
  Z_1(\tau/2)\\
  Z_2(\tau/2)
\end{bmatrix}}_F
\]
for a given threshold $\varepsilon$; however, even before taking into account the preconditioner, computing 
\[
\tilde{Z}_2(\tau/2)^T(A_0-c I)+(A_0^T+cI)\tilde{Z}_2(\tau/2)+
\tilde{Z}_1(\tau/2)^TA_1+A_1^T\tilde{Z}_1(\tau/2)
\]
may amplify this error by a coefficient which is difficult to bound \emph{a priori}. Hence we can only obtain a very weak result: \emph{if} integrating the ODE~\eqref{eq:Z1Z2} with relative accuracy $\frac{\varepsilon}{\norm{r^{\mathrm{inex}}_{i-1}}}$ produces a relative error in $\tilde{L}_c(L_c(X))$ which is bounded by $\frac{C\varepsilon}{\norm{r^{\mathrm{inex}}_{i-1}}}$ for some constant $C$, then the residual of the computed solution satisfies
\[
\norm{L_c(X) + W}_F \leq \norm{r^{\mathrm{inex}}_k} + \frac{k}{\sigma_{\min}(H^{\mathrm{inex}}_k)} C\varepsilon.
\]

\subsection{A residual measure} \label{sec:residual}
It is useful to have a method to assess the accuracy of a computed solution to the system~\eqref{eq:dlyaps}. This is a nontrivial task: first of all, this is a system of delay differential equations, so trying to evaluate it on a computer requires careful approximation; moreover, even ignoring this fact, due to the nontrivial coupling conditions between the values of the function in the two parts of the interval $[0,\tau]$, it is not immediate to choose a $n\times n$ initial value, integrate the equations, and produce an associated $W$ which we can use to test the methods on a problem for which we know the exact solution.

To this purpose, we suggest a residual measure as follows. Given approximations $\tilde{U}_0 \approx U(0),\tilde{U}_\tau \approx U(\tau)$ computed by a numerical method, we check that:
\begin{itemize}
	\item integrating numerically with \texttt{ode45} the ODE~\eqref{eq:Z1Z2} from the initial value $t=\tau/2$ $Z_1(\tau/2) = \tilde{U}_\tau, Z_2(\tau/2) = \tilde{U}_0$ to $t=0$ produces values $Z_1(0),Z_2(0)$ such that $r_1 := \norm{Z_1(0)-Z_2(0)}_F$ is small (compared to $s_1:=\norm{Z_1(0)}_F$);
	\item $\tilde{U}_0$ is such that $r_2 := \norm{\tilde{U}_0-\tilde{U}_0^\TT}_F$ is small (compared to $s_2 := \norm{\tilde{U}_0}_F$); and
	\item the quantity $r_3 := \norm{\tilde{U}_0A_0+A_0^\TT \tilde{U}_0+ \tilde{U}_\tau^\TT A_1+A_1^\TT \tilde{U}_\tau + W}_F$ is small (compared to $s_3 := \norm{W}_F$).
\end{itemize}
We use the Frobenius norm here since we care about speed of computation when $n$ may reach the order of thousands. To avoid issues in cases where one of the $s_i$ is very small and hence its relative residual may be large, we define a global residual measure as
\[
\operatorname{res}(\tilde{U}_0,\tilde{U}_\tau) := \frac{r_1+r_2+r_3}{s_1+s_2+s_3}.
\]
This residual measure is built on approximations to $U(0)$ and $U(\tau)$ as its inputs. It is indeed possible to construct an analogous measure starting from an approximation to $U(\tau/2)$ instead, which
may look more natural in view of the development in the previous sections. However, a reader looking with critical eye may wonder if the good results obtained by the methods introduced here are due to the choice of a residual function that favors the midpoint $U(\tau/2)$ over the endpoints $U(0)$ and $U(\tau)$, since our method builds heavily on $U(\tau/2)$, while it is not a quantity that appears naturally in the competing algorithms. Thus we choose to work with $\tilde{U}_0, \tilde{U}_\tau$ to get a fairer assessment of the merits of this method.

\section{Simulations}\label{sect:simulations}
\subsection{A small example}\label{sect:smallexample}

In order to illustrate the preconditioner and properties of our approach
we first consider a small example with randomly generated $A_0$ matrix.
We specify the matrices for reproducibility 
\[ 
A_0=
\begin{bmatrix}
   -26 &   22  &  -1 &   -4\\
     2 &  -24  &  -4 &    1\\
     7 &   11  & -24 &  -22\\
   -13 &   15  &  -1 &   -9
\end{bmatrix},\;\;
A_1=\alpha\diag(-1,-0.5,0,0.5),\;\;W=I
\] 
and $\tau=1$. We carry out simulations for different $\alpha=\|A_1\|$.
The time-delay system is stable for all $\alpha\in[0,10]$. 
The corresponding delay Lyapunov equation satisfies
\[
 U(\tau/2)\approx
 \frac{1}{100}\cdot\begin{bmatrix}
    0.2302& -0.0156&   0.0101 &  -0.3729\\
   -0.0885&  0.0044&  -0.0038 &   0.1380\\
    0.1466& -0.0057&   0.0056 &  -0.2263\\
   -0.5485&  0.0331&  -0.0238 &   0.8755
 \end{bmatrix}
\]
for $\alpha=1$.

We combine our approach with two different generic iterative methods for linear systems
of equations, GMRES \cite{Saad:1986:GMRES} and BiCGStab \cite{Vorst:1992:BICGSTAB} and select $c=1$.
To illustrate the properties of the performance
of the iterative method, we solve the ODE defining
$L_c$ to full precision with the matrix exponential.
The absolute error as a function of iteration is given in 
Figure~\ref{fig:experiment_small_convergence}.
 Both methods successfully solve the
problem before the break-down at iteration
$n^2$ except for $\|A_1\|=10$.
No substantial difference between the two iterative
methods can be observed
in the error as a function of iteration, i.e., 
 nothing can be concluded regarding which of
the two variants is better
for this problem. The convergence  of
the two methods is faster for small $\|A_1\|$. This
is due to the fact that the preconditioner
is more effective when $\|A_1\|$ is small,
which is consistent with Theorem~\ref{thm:precondquality} 
and Figure~\ref{fig:precondquality}, where
we clearly see that the norm of the
 preconditioned
system $X\mapsto\tilde{L}_c^{-1}(L_c(X))$ 
has a linear dependence on $\|A_1\|$. The same
conclusion is supported by the localization of the
eigenvalues of the linear map $X\mapsto\tilde{L}_c^{-1}(L_c(X))$
in Figure~\ref{fig:precondquality}b.


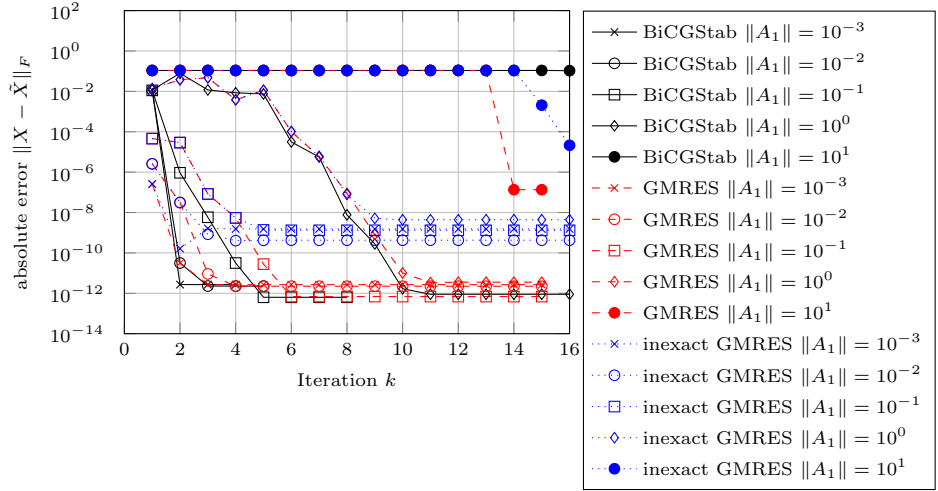
\begin{figure}[h]
  \begin{center}
\tikzsetnextfilename{experiment_small_convergence} 
\begin{tikzpicture}
\begin{semilogyaxis}%
[width=7.5cm,
 height=5.88cm,
 xlabel={Iteration $k$},%
 ylabel={absolute error $\|X-\tilde{X}\|_F$},%
 xmin=0,xmax=16,%
 ymin=1e-14,ymax=1e2,%
 y tick label style={text width=\widthof{$10^{-10}$}}, 
 xtick={0,2,4,...,16},%
 ytick={1e-14,1e-12,1e-10,1e-8,1e-6,1e-4,1e-2,1,1e2},
 legend entries={
BiCGStab $\|A_1\|=10^{-3}$, 
BiCGStab $\|A_1\|=10^{-2}$, 
BiCGStab $\|A_1\|=10^{-1}$, 
BiCGStab $\|A_1\|=10^0$, 
BiCGStab $\|A_1\|=10^1$, 
GMRES $\|A_1\|=10^{-3}$, 
GMRES $\|A_1\|=10^{-2}$, 
GMRES $\|A_1\|=10^{-1}$, 
GMRES $\|A_1\|=10^0$, 
GMRES $\|A_1\|=10^1$, 
inexact GMRES $\|A_1\|=10^{-3}$, 
inexact GMRES $\|A_1\|=10^{-2}$, 
inexact GMRES $\|A_1\|=10^{-1}$, 
inexact GMRES $\|A_1\|=10^0$, 
inexact GMRES $\|A_1\|=10^1$, 
},%
 legend pos=outer north east, 
 legend cell align=left,
 grid,
 domain=0:100,
]
\addplot[black, solid,mark=x, mark options={solid,scale=1}] table[x index=0,y index=1]{gfx/experiment_small_convergence15.txt};
\addplot[black, solid,mark=o, mark options={solid,scale=1}] table[x index=0,y index=1]{gfx/experiment_small_convergence12.txt};
\addplot[black, solid,mark=square, mark options={solid,scale=1}] table[x index=0,y index=1]{gfx/experiment_small_convergence9.txt};
\addplot[black, solid,mark=diamond, mark options={solid,scale=1}] table[x index=0,y index=1]{gfx/experiment_small_convergence6.txt};
\addplot[black, solid,mark=*, mark options={solid,scale=1}] table[x index=0,y index=1]{gfx/experiment_small_convergence3.txt};
\addplot[red, dashed,mark=x, mark options={solid,scale=1}] table[x index=0,y index=1]{gfx/experiment_small_convergence14.txt};
\addplot[red, dashed,mark=o, mark options={solid,scale=1}] table[x index=0,y index=1]{gfx/experiment_small_convergence11.txt};
\addplot[red, dashed,mark=square, mark options={solid,scale=1}] table[x index=0,y index=1]{gfx/experiment_small_convergence8.txt};
\addplot[red, dashed,mark=diamond, mark options={solid,scale=1}] table[x index=0,y index=1]{gfx/experiment_small_convergence5.txt};
\addplot[red, dashed,mark=*, mark options={solid,scale=1}] table[x index=0,y index=1]{gfx/experiment_small_convergence2.txt};
\addplot[blue, dotted,mark=x, mark options={solid,scale=1}] table[x index=0,y index=1]{gfx/experiment_small_convergence13.txt};
\addplot[blue, dotted,mark=o, mark options={solid,scale=1}] table[x index=0,y index=1]{gfx/experiment_small_convergence10.txt};
\addplot[blue, dotted,mark=square, mark options={solid,scale=1}] table[x index=0,y index=1]{gfx/experiment_small_convergence7.txt};
\addplot[blue, dotted,mark=diamond, mark options={solid,scale=1}] table[x index=0,y index=1]{gfx/experiment_small_convergence4.txt};
\addplot[blue, dotted,mark=*, mark options={solid,scale=1}] table[x index=0,y index=1]{gfx/experiment_small_convergence1.txt};

\end{semilogyaxis}
\end{tikzpicture}
    \caption{
      Convergence for different preconditioned iterative methods
applied to the small example in Section~\ref{sect:smallexample}. The tolerance for the inexact solver is $\varepsilon=10^{-10}$.
      \label{fig:experiment_small_convergence}
    }
  \end{center}
\end{figure}

\begin{figure}[h]
  \begin{center}
\subfigure[Difference in norm between the preconditioned matrix and the identity]{
\tikzsetnextfilename{experiment_small_norms} 
\begin{tikzpicture}
\begin{loglogaxis}%
[width=6cm,
 height=5.88cm,
 ylabel={$\underset{X\in\CC^{n\times n}}{\max}\frac{\|\tilde{L}_c^{-1}(L_c(X))-X\|_F}{\|X\|_F}$},%
 xlabel={$\|A_1\|$},%
 ymin=1e-6,ymax=1e6,%
 xmin=1e-5,xmax=1e2,%
 y tick label style={text width=\widthof{$10^{-}$}}, 
 xtick={1e-4,1e-2,1e0,1e2},%
 ytick={1e-6,1e-4,1e-2,1e0,1e2,1e4,1e6},
 legend pos=outer north east, 
 legend cell align=left,
 grid,
 domain=0:100,
]
\addplot[black, solid,mark=none, mark options={solid,scale=1}] table[x index=0,y index=1]{gfx/precond_visualization2.txt};
\end{loglogaxis}
\end{tikzpicture}%
}%
\subfigure[Difference in eigenvalue location]{
	\tikzsetnextfilename{experiment_small_norms} 
\begin{tikzpicture}
\begin{loglogaxis}%
[width=6cm,
 height=5.88cm,
 ylabel={$\max \abs{\lambda-1}$},%
 xlabel={$\|A_1\|$},%
 ymin=1e-6,ymax=1e6,%
 xmin=1e-5,xmax=1e2,%
 y tick label style={text width=\widthof{$10^{-}$}}, 
 xtick={1e-4,1e-2,1e0,1e2},%
 ytick={1e-6,1e-4,1e-2,1e0,1e2,1e4,1e6},
 legend pos=outer north east, 
 legend cell align=left,
 grid,
 domain=0:100,
]
\addplot[black, solid,mark=none, mark options={solid,scale=1}] table[x index=0,y index=1]{gfx/precond_visualization1.txt};
\end{loglogaxis}
\end{tikzpicture}%
}
    \caption{Illustration of the quality of 
the preconditioner.
      \label{fig:precondquality}
    }
  \end{center}
\end{figure}
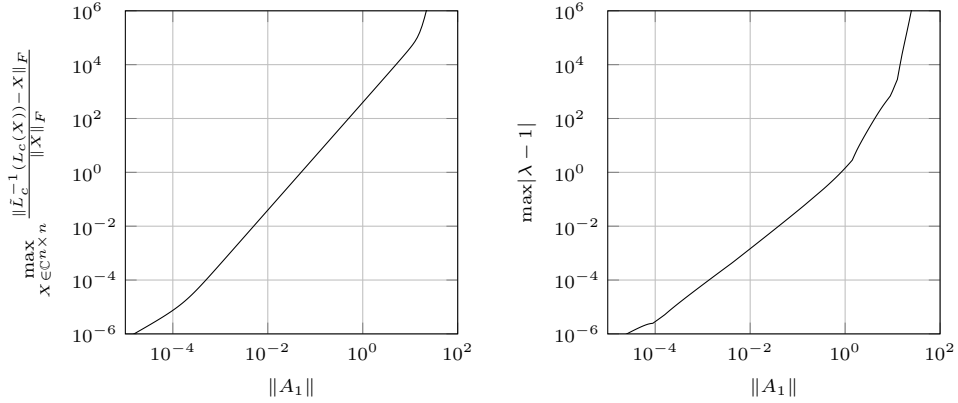

\subsection{A large-scale example}\label{sect:largeexample}
In relation to other methods
for delay Lyapunov equations, our iterative approach
is likely to have better relative performance
for large problems. We illustrate this with 
the following  time-delay
system stemming from the discretization
of a partial differential equation with
delay\footnote{The Matlab code for the example and
the simulation is publicly available on \url{http://www.math.kth.se/~eliasj/src/dlyap_precond}}. More precisely, we consider
on the domain  $(x,y)\in[0,1]\times[0,1]$ the
PDDE
\begin{subequations}\label{eq:pdde}
\begin{eqnarray}
  \ddot{v}(x,y,t)&=&\Delta v(x,y,t)
+\dot{v}(x,y,t)+
f(x,z)\frac{\partial v}{\partial x}(x,y,t-\tau)+u(t)\\
  w(t)&=&v(1/2,1/2)
\end{eqnarray}
\end{subequations}
where $f(x,y)=f_0\cos(xy)\sin(\pi x)$ 
with homogeneous Dirichlet boundary conditions, and $f_0=5$.
The PDDE \eqref{eq:pdde} can be interpreted as 
waves propagating on a square, with damping and 
delayed feedback control.
PDDEs are for instance studied in  \cite{Wu:1996:PFDE}.
In order to reach a problem of the form \eqref{eq:tds}
we rephrase \eqref{eq:pdde} as a 
 system of PDDEs which is first-order in time.
We carry out a semi-discretization with finite differences
in space 
with $n_x+1$ intervals in the $x$-direction 
and $n_y+1$ intervals in the $y$-direction,
i.e., 
$h_x=1/(n_x+1)$, $x_{k}=kh_x$, $k=1,\ldots,n_x$
and 
$h_y=1/(n_y+1)$, $y_{k}=kh_y$, $k=1,\ldots,n_y$.
The corresponding discretized time-delay system
is of the form 
\eqref{eq:tds} with coefficient matrices given by
\begin{subequations}
\begin{eqnarray}
  A_0&=&
  \begin{bmatrix}
   0 & I\\
   I\otimes D_{xx}+ D_{yy}\otimes I & -I
  \end{bmatrix}
   \\
  A_1&=& 
  \begin{bmatrix}
    0 & 0  \\
    \diag(F)(I\otimes D_x) & 0
  \end{bmatrix}
\\
  B_0 &=&
\begin{bmatrix}1&\cdots&1&0&\cdots&0\end{bmatrix}^T\\
  C_0 &=&
  \begin{bmatrix}
e_{(n_y+1)/2}^T\otimes e_{(n_x+1)/2}^T &0&\cdots&0\\
  \end{bmatrix}
\end{eqnarray}
\end{subequations}
where 
\begin{eqnarray*}
 D_{xx}&=&\frac{1}{h_{x}^2}
 \begin{smallbmatrix}
   -2& 1      &      & \\
    1& \ddots &\ddots& \\
     & \ddots &\ddots&1\\
     &        & 1    & -2
 \end{smallbmatrix}\in\RR^{n_x\times n_x},\;
 D_{yy}=
\frac{1}{h_y^2}
 \begin{smallbmatrix}
   -2& 1      &      & \\
    1& \ddots &\ddots& \\
     & \ddots &\ddots&1\\
     &        & 1    & -2
 \end{smallbmatrix}\in\RR^{n_y\times n_y},\\
 D_{x}&=&\frac{1}{2h_x}
 \begin{smallbmatrix}
    0 & 1      &      & \\
    -1& \ddots &\ddots& \\
     & \ddots & \ddots&1\\
     &        & -1    & 0
 \end{smallbmatrix}\in\RR^{n_x\times n_x},\;\; F=\vec([f(x_i,y_j)]_{i,j=1}^{n_x,n_y}).
\end{eqnarray*}
In the setting of $\mathcal{H}_2$-norm computation
(as in \cite{Jarlebring:2011:H2})
we need to solve the delay Lyapunov
equation with $W=C_0^TC_0$. 

We carried out simulations of this system using a computer with
an Intel i7 quad-core
processor with 2.1GHz and 16 GB of RAM.
For the finest discretization that we could treat with our
approach, we have $n_x=n_y=23$, $n=1058$, $\|A_0\|_2\approx 5000$
and $\|A_1\|\approx 100$. We  again select $c=1$.


%

%

In order to solve the ODE \eqref{eq:Z1Z2} we 
used either a fixed fourth order Runge-Kutta method with $N=500$ grid points, paired with GMRES with tolerance $10^{-8}$,
or the Prince-Dormand method (Matlab's \texttt{ode45}) with adaptive step-size, paired with inexact Krylov with tolerance $10^{-8}$. 
The iteration history of the two variants 
is visualized in Figure~\ref{fig:experiment_large_convergence} 
for $n=1058$. We observe linear convergence and
no substantial difference in convergence rate. 

The execution time of our approach in relation
to some other approaches in the literature is reported in 
Table~\ref{tbl:performance}. Note that these other approaches fail for the larger problems, due to their higher memory requirements.
Discr.~first represents the approach 
discussed in \cite{Vanbiervliet:2011:H2DISC} 
and used in \cite{Jarlebring:2013:BALANCING} with $N=10$ grid points.
This method produces an approximation $\tilde{U}_0$ of $U(0)$, but we do not know
of a simple way to produce an approximation of $U(\tau)$ with it; hence we cannot evaluate the residual measure. We note, however, that this method produces an approximation $\tilde{U}_0$ which differs significantly from the approximation $\widehat{U}_0$ produced by the matrix exponential method.

Note also in Table~\ref{tbl:performance} that the number of iterations required to reach
a specified tolerance appears not to grow substantially 
with the size of problem. Hence, the method
appears to have essentially grid-independent convergence
rate, which is considered a very important
feature of a preconditioner.

Table~\ref{tbl:performance} shows that the inexact method gives results of comparable accuracy in a slightly lower time.

In a detailed profiling of our approach, we identify that
two components are dominating, solving the ODE, i.e., 
computing the action, and solving the T-Sylvester equation. 
For the finest discretization, solving
one T-Sylvester equation took approximately 320 seconds 
and carrying out one step of RK4 required 30 seconds.
We note that the implementation that we have used to solve T-Sylvester equations is not particularly optimized; it is a vectorized version of the algorithm in~\cite{DeTeran:2011:TSYLV} that we have implemented in Matlab for use in these experiments. The complexity in flops of the required computations is only slightly larger than what is required for solving a standard Sylvester equation with the Bartels-Stewart algorithm, a task which requires less than 8 seconds on our machine. Hence, we expect a major reduction in the timings (and a greater difference between the exact and inexact approach) if a carefully optimized solver for the T-Sylvester is used instead.
We also wish to point out that although our theory provides some insight on when the
iterative method is expected to work well, its behavior is still problem dependent. In
Figure~\ref{fig:pdevariantconv} we see that the a different choice of $f_0$ leads
to much faster convergence.

To our knowledge, the largest delay Lyapunov equation
previously solved in literature is with $n=110$ in \cite{Jarlebring:2013:BALANCING}.

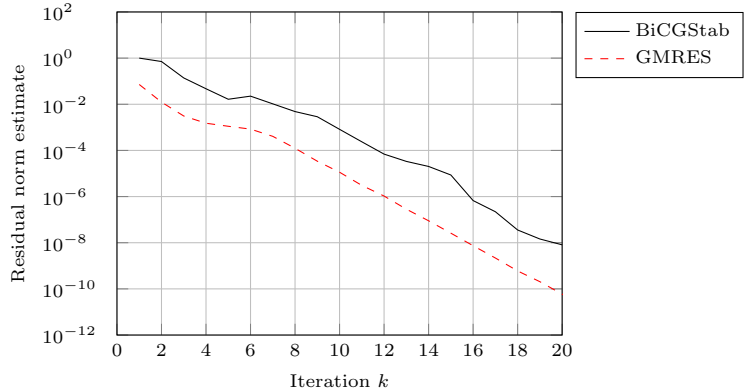
\begin{figure}[h]
  \begin{center}
\tikzsetnextfilename{experiment_large2_convergence} 
\begin{tikzpicture}
\begin{semilogyaxis}%
[width=7.5cm,
 height=5.88cm,
 xlabel={Iteration $k$},%
 ylabel={Residual norm estimate},%
 xmin=0,xmax=20,%
 ymin=1e-12,ymax=1e2,%
 y tick label style={text width=\widthof{$10^{-10}$}}, 
 xtick={0,2,4,...,20},%
  ytick={1e-12,1e-10,1e-8,1e-6,1e-4,1e-2,1,1e2},
 legend entries={
BiCGStab,
GMRES
},%
 legend pos=outer north east, 
 legend cell align=left,
 grid,
 domain=0:100,
]
\addplot[black, solid,mark=none, mark options={solid,scale=1}] table[x index=0,y index=1]{gfx/experiment_large2_convergence1.txt};
\addplot[red, dashed,mark=none, mark options={solid,scale=1}] table[x index=0,y index=1]{gfx/experiment_large2_convergence2.txt};
\end{semilogyaxis}
\end{tikzpicture}
    \caption{
Convergence of the iterative methods
with $T$-Sylvester preconditioning
 corresponding to the time-delay system stemming from the discretization of the
PDDE \eqref{eq:pdde} with $n=2n_xn_y=1058$ for
the example in Section~\ref{sect:largeexample}.
      \label{fig:experiment_large_convergence}
    }
  \end{center}
\end{figure}

\begin{table}[h]
  \begin{center}\small
          \begin{tabular}{l||c|c|c|c|c|c}
                    & Matrix exp. \cite{Plischke:2005:TRANSIENT} &Discr. first    & \multicolumn{2}{|c}{RK4 + GMRES} & \multicolumn{2}{|c}{RK45 + inexact GMRES} \\
\hline       $n$& Wall time &Wall time 
&Wall time & iterations &Wall time & iterations \\
            \hline
            \hline
            $28$   & 1.00 sec   & 0.07 sec & 1.15 sec   & 13 & 2.40 sec& 13\\
            $50$   & 141 sec & 0.33 sec  & 3.9 sec  & 15 & 0.74 sec& 14\\
            $242$  & MEMERR  & 111 sec & 116 sec  &17 & 60 sec & 15\\
            $722$  & MEMERR  & MEMERR & 35.6 min &18 & 26.9 min & 16\\
            $1058$ & MEMERR  & MEMERR & 1.79 hrs& 18 & 1.67 hrs & 16
          \end{tabular}

          \medskip

\begin{tabular}{l||c|c|c|c}
                    & Matrix exp. \cite{Plischke:2005:TRANSIENT} &Discr. first    & RK4 + GMRES & RK45 + inexact GMRES\\ \hline
                    $n$ & $\operatorname{res}(\tilde{U}_0,\tilde{U}_\tau)$ & $\operatorname{res}(\tilde{U}_0,\tilde{U}_\tau)$ & $\operatorname{res}(\tilde{U}_0,\tilde{U}_\tau)$ & $\operatorname{res}(\tilde{U}_0,\tilde{U}_\tau)$\\
                    \hline \hline
            $28$ & $1.4\times 10^{-13}$ & N/A & $1.6\times 10^{-8}$   & $1.7\times 10^{-8}$\\
            $50$ & $1.7\times 10^{-11}$ & N/A & $6.2\times 10^{-9}$ & $2.7\times 10^{-8}$\\
            $242$  & MEMERR & N/A & $1.6\times 10^{-8}$   & $1.7\times 10^{-7}$\\
            $722$  & MEMERR & MEMERR & $2.2\times 10^{-8}$   & $1.8\times 10^{-7}$\\
            $1058$ & MEMERR & MEMERR & $3.8\times 10^{-8}$   & $2.5\times 10^{-7}$
\end{tabular}

\medskip

\begin{tabular}{l||c|c|c|c}
                    & Matrix exp. \cite{Plischke:2005:TRANSIENT} &Discr. first    & RK4 + GMRES & RK45 + inexact GMRES\\ \hline
                    $n$ & $\frac{\norm{\tilde{U}_0 -\widehat{U}_0}}{\norm{\widehat{U}_0}}$ & $\frac{\norm{\tilde{U}_0 -\widehat{U}_0}}{\norm{\widehat{U}_0}}$ & $\frac{\norm{\tilde{U}_0 -\widehat{U}_0}}{\norm{\widehat{U}_0}}$ & $\frac{\norm{\tilde{U}_0 -\widehat{U}_0}}{\norm{\widehat{U}_0}}$\\
                    \hline \hline
            $28$ & $0$ & $2.7\times 10^{-4}$ & $6.7\times 10^{-9}$   & $7.7\times 10^{-9}$\\
            $50\phantom{18}$ & $0$ & $1.8\times 10^{-2}$ & $2.4\times 10^{-9}$ & $1.1\times 10^{-8}$
\end{tabular}
    \caption{
      \label{tbl:performance}
Performance in relation to other methods: time, iterations residual, error in $\tilde{U}_0$ with respect to the Matrix exp.~method. 
    }
  \end{center}
\end{table}

%
%
%

\begin{figure}[h]
  \begin{center}
\tikzsetnextfilename{experiment_small_convergence} 
\begin{tikzpicture}
\begin{semilogyaxis}%
[width=7.5cm,
 height=5.88cm,
 xlabel={Iteration $k$},%
 ylabel={Residual norm},%
 xmin=0,xmax=16,%
 ymin=1e-10,ymax=1e0,%
 y tick label style={text width=\widthof{$10^{-10}$}}, 
 xtick={0,2,4,...,16},%
 ytick={1e-14,1e-12,1e-10,1e-8,1e-6,1e-4,1e-2,1,1e2},
 legend entries={
$f_0=0.5$,
$f_0=5$
},%
 legend pos=outer north east, 
 legend cell align=left,
 grid,
 domain=0:100,
]
\addplot[black, solid,mark=x, mark options={solid,scale=1}] table[x index=0,y index=1]{gfx/pdevariantconv1.txt};
\addplot[black, solid,mark=o, mark options={solid,scale=1}] table[x index=0,y index=1]{gfx/pdevariantconv2.txt};

\end{semilogyaxis}
\end{tikzpicture}
    \caption{The convergence of GMRES for different choices of $f_0$. 
      \label{fig:pdevariantconv}
    }
  \end{center}
\end{figure}
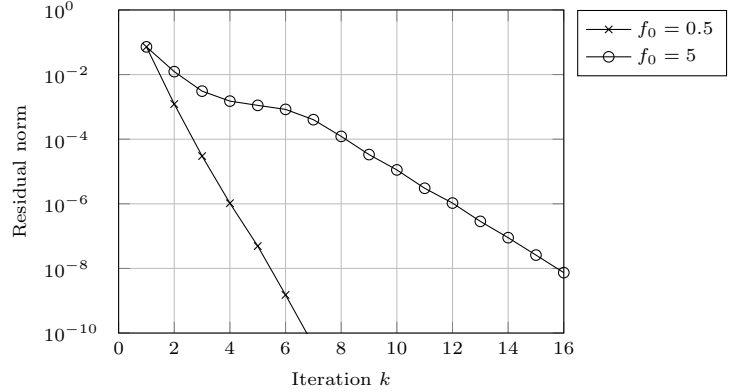

\section{Concluding remarks and outlook}\label{sect:conclusions}

We have in this paper proposed a procedure
to solve delay Lyapunov equations
based on iterative methods for linear systems
combined with a direct method for T-Sylvester equations. 
Although the method
performs well in practice, there appears to be 
possibilities to improve it further, which
we consider beyond the scope of the paper.  

As observed in the simulations, the dominating ingredient
of the approach is the solution to the T-Sylvester equation.
Hence, in order to solve even larger problems we need new 
methods for $T$-Sylvester equations. Improvements
are possible, e.g., by lower level implementations,
or by developing methods which can  take the sparsity of the matrices into account, e.g., similar to the 
Krylov methods and rational Krylov methods
for Lyapunov equations \cite{Simoncini:2007:KPIK}
or approaches based on Riemannian optimization
\cite{Vandereycken:2010:LYAP}.

Our work on inexact Krylov methods may also allow extension to other
types of iterative methods, in particular flexible variants of GMRES
\cite{Saad1993}. Although the flexible variants of GMRES can work
better in situations where the preconditioner changes in every iteration,
the understanding of their convergence is less mature.

The preconditioner in general plays an important
role in iterative methods for linear systems
and the effectiveness of the preconditioner is typically
very
problem-dependent. This is also the case in our
approach. Although the simulations often worked well,
during some experiments, 
in particular situations where $A_0$ have some eigenvalues which are
very negative, the preconditioner did not appear very effective,
even if $\|A_1\|$ was quite small.
This can be due to the fact that the hidden
constant in the
expression \eqref{eq:precondquality} may be large.

The delay Lyapunov equation has been generalized
in several ways, e.g., to multiple delays
and neutral systems. Our approach might be generalizable 
to some of these cases. The simplest situations appears
to be if the delays are integer
multiplies of each other, also known
as commensurate delays.
For the commensurate case there
are procedures which resemble our reformulation \eqref{eq:Z1Z2}
with Sylvester resultant matrices
\cite[Problem~6.72]{Plischke:2005:TRANSIENT}.
However, this increases the size of the problem.
An attractive feature of our approach is 
that we work only with matrices of size $n$, which
would not be the case in the direct adaption to multiple
commensurate delays using \cite[Problem~6.72]{Plischke:2005:TRANSIENT}.

\section*{Acknowledgments}
The authors thank Antti Koskela and Tobias Damm
for discussions about early results of the paper. 

F.~Poloni acknowledges the support of the PRA 2014 project ``Mathematical models and computational methods for complex networks'' of the University of Pisa, and of INDAM (Istituto Nazionale di Alta Matematica). E.~Jarlebring acknowledges the support of the Swedish research council (Vetenskapsr\aa{}det) project 2013-4640.

We thank the referees and editor for their constructive comments.

\def\appendixname{Appendix } 
\appendix
\section{Technical bounds} \label{sec:appendixtechnical}
\noindent
The following results are needed in the proof
of Theorem~\ref{thm:precondquality}.
\begin{lemma}\label{lem:Zbound}
Suppose $Z_1$ and $Z_2$ satisfy \eqref{eq:Z1Z2} with 
initial condition $Z_1(0)=Z_2(0)=X$. For $i=1,2$, 
\[
 \|Z_i(t)\|_F \le 2\exp(2t(\norm{A_0}+\norm{A_1}))\|X\|_F.
\]
\end{lemma}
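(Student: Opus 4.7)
The plan is to exploit the explicit vectorized solution formula~\eqref{eq:vectorizedODE}, which, when specialized to the initial conditions $Z_1(0) = Z_2(0) = X$, reads
\[
v(t) := \begin{bmatrix} \vec Z_1(t) \\ \vec (Z_2(t)^T) \end{bmatrix} = \exp(t\mathcal{A})\,v(0),
\qquad
v(0) = \begin{bmatrix} \vec X \\ \vec(X^T) \end{bmatrix},
\]
with $\mathcal{A}$ as in \eqref{eq:defmathcalA}. The proof then reduces to estimating $\|\exp(t\mathcal{A})\|_2$ via $\|\mathcal{A}\|_2$, and relating the $2$-norm of $v$ to the Frobenius norm of $Z_i$ and of $X$.

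First I would bound $\|\mathcal{A}\|_2$ from its block structure. Each of the four blocks is a Kronecker product with an identity factor, so by the identity $\|A\otimes B\|_2=\|A\|_2\|B\|_2$ each has spectral norm equal to $\|A_0\|_2$ or $\|A_1\|_2$. Writing $\mathcal{A}$ as the sum of four block matrices, each of which is a single Kronecker block embedded with zero padding, and using that such padding does not change the spectral norm, the triangle inequality gives
\[
\|\mathcal{A}\|_2 \;\le\; 2\|A_0\|_2 + 2\|A_1\|_2.
\]
The standard estimate $\|\exp(t\mathcal{A})\|_2\le\exp(t\|\mathcal{A}\|_2)$ for $t\ge 0$ then yields
\[
\|v(t)\|_2 \;\le\; \exp\!\bigl(2t(\|A_0\|_2+\|A_1\|_2)\bigr)\,\|v(0)\|_2.
\]

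To conclude, observe that $\|v(0)\|_2^2 = \|\vec X\|_2^2 + \|\vec(X^T)\|_2^2 = 2\|X\|_F^2$, so $\|v(0)\|_2 = \sqrt{2}\,\|X\|_F$. Moreover $\|Z_i(t)\|_F = \|\vec Z_i(t)\|_2 \le \|v(t)\|_2$ for $i=1,2$ because $\vec Z_i(t)$ is a sub-block of $v(t)$ (using $\|\vec(Z_2(t)^T)\|_2 = \|Z_2(t)\|_F$ for $i=2$). Combining these observations gives
\[
\|Z_i(t)\|_F \;\le\; \sqrt{2}\,\exp\!\bigl(2t(\|A_0\|_2+\|A_1\|_2)\bigr)\|X\|_F \;\le\; 2\exp\!\bigl(2t(\|A_0\|_2+\|A_1\|_2)\bigr)\|X\|_F,
\]
as claimed.

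I do not expect any serious obstacle: the only ingredients are the already-derived explicit formula \eqref{eq:vectorizedODE} and routine matrix-norm estimates. An equally short alternative would be a direct Gr\"onwall argument on $\phi(t):=\|Z_1(t)\|_F^2+\|Z_2(t)\|_F^2$, differentiating and using Cauchy--Schwarz and AM--GM to obtain $\phi'(t)\le 2(\|A_0\|_2+\|A_1\|_2)\phi(t)$; but the vectorization route is preferable because it recycles machinery already introduced in the paper.
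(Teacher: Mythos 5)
Your proof is correct and follows essentially the same route as the paper: bound the vectorized solution via $\|\exp(t\mathcal{A})\|\le\exp(t\|\mathcal{A}\|)$, estimate $\|\mathcal{A}\|\le 2(\|A_0\|+\|A_1\|)$ blockwise using the Kronecker norm identity, and relate the stacked vector norm to $\|X\|_F$. Your intermediate constant $\sqrt{2}$ is slightly sharper than the paper's direct factor $2$, but this is immaterial to the stated bound.
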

\begin{proof} 
We rely on the vectorized form~\eqref{eq:vectorizedODE} of the ODE defining $Z_i(t)$; we have
\begin{equation*}
\norm{Z_i(t)}_F \le \norm*{\begin{bmatrix}\vec Z_1(t)\phantom{^T}\\ \vec Z_2(t)^T\end{bmatrix}} \le
\norm{\exp(t\mathcal{A})}\norm*{\begin{bmatrix}\vec X\phantom{^T}\\\vec X^T \end{bmatrix}} \leq 2\exp(t\norm{\mathcal{A}})\norm{X}_F.
\end{equation*}
To complete the proof, we have to estimate the norm of the matrix $\mathcal{A}$ in~\eqref{eq:defmathcalA}: we have
\begin{multline*}
\norm{\mathcal{A}} \leq \norm{A_0^T\otimes I_n} + \norm{A_1^T\otimes I_n} + \norm{I_n\otimes A_1^T} + \norm{I_n\otimes A_0^T} =\\ 2(\norm{A_0}+\norm{A_1}),
\end{multline*}
where we have used the fact that $\norm{M\otimes N} = \norm{M}\norm{N}$.
\end{proof}
\begin{lemma}\label{thm:tLc_bound}
Suppose that $A_0$ has no Hamiltonian eigenpairing. 
Then, there exists a constant $K$ depending only on $A_0$ and $c$ such that
\[
 \|\tilde{L}_c^{-1}(Z)\|_F\le K \exp(\tau\|A_0\|/2)\|Z\|_F.
\]
\end{lemma}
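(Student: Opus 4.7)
The plan is to use the explicit factorization of $\tilde{L}_c^{-1}$ derived in equation~\eqref{eq:defLtildeInv}, namely $\tilde{L}_c^{-1}(Z) = T^{-1}(Z)\exp(\tau A_0/2)$, and bound each factor separately. First I would write
\[
\|\tilde{L}_c^{-1}(Z)\|_F = \|T^{-1}(Z)\exp(\tau A_0/2)\|_F \le \|T^{-1}(Z)\|_F \,\|\exp(\tau A_0/2)\|_2,
\]
using the standard submultiplicativity $\|AB\|_F \le \|A\|_F\|B\|_2$. The scalar factor is then handled by the series bound $\|\exp(\tau A_0/2)\|_2 \le \exp(\tau\|A_0\|_2/2)$, which already contains the desired exponential factor in the statement.

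It then remains to show that $T^{-1}$ is a bounded linear operator on $\mathbb{R}^{n\times n}$ whose norm $K := \|T^{-1}\|_{F\to F}$ depends only on $A_0$ and $c$. For this I would invoke the solvability theorem for the real T-Sylvester equation quoted in the excerpt, applied with $M = A_0^T+cI$ and $N = A_0-cI$. The key verification is that the Hamiltonian eigenpairing hypothesis on $A_0$ implies the hypothesis $\mu_i\bar\mu_j \ne 1$ of that theorem for the pencil $M - \lambda N^T = (A_0^T+cI)-\lambda(A_0^T-cI)$. A direct computation shows that the eigenvalues $\lambda$ of this pencil are exactly the values $\lambda = (\mu+c)/(\mu-c)$ as $\mu$ ranges over the spectrum of $A_0$ (using that $c\ne 0$, so $\lambda=1$ does not occur). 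A short algebraic manipulation then yields
\[
\mu_i\bar\mu_j = 1 \iff 2c(\mu_i+\bar\mu_j) = 0 \iff \mu_i+\bar\mu_j = 0,
\]
which is precisely the Hamiltonian eigenpairing condition. Hence $T$ is an invertible linear map on the finite-dimensional space $\mathbb{R}^{n\times n}$, so its inverse is a bounded linear operator and $K := \|T^{-1}\|_{F\to F}$ is a finite constant determined only by the coefficients $A_0$ and $c$.

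Combining the two bounds yields the stated inequality with this $K$. The only nontrivial step is the eigenvalue correspondence for the pencil and the reduction of the solvability condition to Hamiltonian eigenpairing; once that is in place, everything else is standard. I do not attempt to give a more quantitative dependence of $K$ on $A_0$ (such as an explicit formula in terms of the spectral gap $\min_{i,j}|\lambda_i+\bar\lambda_j|$), since only the existence of a finite constant is required by the statement and by its use inside the proof of Theorem~\ref{thm:precondquality}.
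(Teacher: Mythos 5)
Your proposal is correct and takes essentially the same route as the paper: factor via \eqref{eq:defLtildeInv}, apply the mixed norm inequality $\|MN\|_F\le\|M\|_F\|N\|_2$ together with $\|\exp(\tau A_0/2)\|\le\exp(\tau\|A_0\|/2)$, and take $K$ to be the operator norm of $T^{-1}$, which is finite because $T$ is invertible under the no-Hamiltonian-eigenpairing hypothesis. The only addition is that you spell out the ``quick computation'' reducing the T-Sylvester solvability condition to $\mu_i+\bar{\mu}_j\neq 0$, which the paper carries out in the main text before the lemma rather than inside its proof (minor notational slip aside: in your displayed equivalence the product condition $\lambda_i\bar{\lambda}_j=1$ concerns the pencil eigenvalues, not the eigenvalues $\mu$ of $A_0$).
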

\begin{proof}
Under the stated hypotheses, $T$ is invertible. Let $K$ be the operator norm of $T^{-1}$, i.e., the smallest constant such that $\|T^{-1}(Z)\|_F \leq K \|Z\|_F$. Then 	
\begin{multline}
\|\tilde{L}^{-1}_c(Z)\|_F=\|T^{-1}(Z)\exp(\tau A_0/2)\|_F \leq \\\|T^{-1}(Z)\|_F\|\exp(\tau A_0/2)\| \leq K \|Z\|_F \exp(\tau \|A_0\|/2),
\end{multline}
where we have used the mixed matrix norm inequality $\norm{MN}_F\leq \norm{M}_F\norm{N}$ \cite[Page~50-5, Fact~10]{Hogben:2014:Handbook}.
\end{proof}

\section*{References}
\bibliography{eliasbib,misc}

\end{document}